\theoremstyle{plain} 
\newtheorem{theorem}{Theorem}
\newtheorem{corollary}{Corollary}
\newtheorem{lemma}{Lemma}
\newtheorem{remark}{Remark}
\newtheorem{example}{\bf Example}[section]
\theoremstyle{definition}
\newtheorem{definition}{Defintion}
\theoremstyle{remark}
\begin{document}
\def\Re{\mathop{\rm Re}\,}
\def\Im{\mathop{\rm Im}\,}
\def\dom{\mathop{\rm dom}\,}
\def\dist{\mathop{\rm dist}}
\def\grad{\mathop{\rm grad}}
\renewcommand{\proof}{\vspace{2mm}\hspace{-7mm}\textit{Proof.}}
\renewcommand{\endproof}{\begin{flushright} \vspace{-2mm}$\Box$\vspace{-4mm}
\end{flushright}}
\newcommand{\phan}{\hspace*{0cm}}
\newcommand{\comment}{}

\author{I. R. Muftahov, D. N. Sidorov and N. A. Sidorov}
\title{On Perturbation Method for the First Kind Equations:\\
  Regularization and Application }
\maketitle{}
\begin{abstract}
One of the most common problems of scientific applications is computation of the derivative of a function specified by possibly noisy or imprecise experimental data. 
Application of  conventional techniques for numerically calculating derivatives will amplify the noise making the result useless. We address this typical ill-posed problem by application of  perturbation method to linear first kind equations
$Ax=f$ with bounded operator $A.$ We assume that we know  the operator $\tilde{A}$ and source function $\tilde{f}$ only such as  $||\tilde{A} - A||\leq \delta,$ $||\tilde{f}-f||< \delta.$ 
The regularizing equation
$\tilde{A}x + B(\alpha)x = \tilde{f}$  possesses the unique solution. Here $\alpha \in S,$
 $S$ is assumed to be an open space in   $\mathbb{R}^n,$ $0 \in \overline{S},$
$\alpha= \alpha(\delta).$
 As result of proposed theory, we suggest a novel algorithm providing accurate results even in the presence of a large amount of noise. \\

Keywords: operator and integral equations of the first kind; stable differentiation; perturbation method, regularization parameter.
\end{abstract}
\markboth{I. R. Muftahov, D.N. Sidorov and N.A. Sidorov}{On Perturbation Method for the First Kind Equations:
  Regularization and Application}


\section*{Introduction}
\hspace{0.7 cm}

Let $A$ be bounded operator in banach space
$X$  with range  ${{R}}(A)$
in a banach space $Y.$ Let us consider the following linear operator equation
\begin{equation}
Ax = f,\,\,\, f \in {R}(A).  \label{eq1}
\end{equation}
We assume  domain  ${R}(A)$ can be non closed and $\mathrm{Ker} \,A \neq \{0\}$. 
In many practical problems one needs to solve an approximate equation
\begin{equation}
\tilde{A}x = \tilde{f},  \label{eq2}
\end{equation}
instead of exact equation. Here
 $\tilde{A}$ and $\tilde{f}$ are approximations of exact operator $A$ and right hand side function $f$ correspondingly such as 
\begin{equation}
||\tilde{A}-A|| \leq  \delta_1, \,\,\, ||\tilde{f}-f|| \leq \delta_2, \,\,\, \delta=\max\{\delta_1,\delta_2\}.  \label{eq3}
\end{equation}
The problem of solution of the equation (\ref{eq2}) is ill-posed and therefore unstable even with respect to small errors and it needs regularization in real world numerous applications. 
The basic results in regularization theory and methods for solution of the inverse
problems have been gained in scientific schools of A.~N.~Tikhonov, V.~I.~Ivanov and 
M.~M.~Laverentiev. Nowadays, this field of contemporary mathematics 
promotes the developments of many interdisciplinary fields in science and technologies
 \cite{hao2012, iv1978,  lat1970, lav1962,  bak1968, tih1974, yag2014}.
  There were many efficient regularization methods have been proposed for operator equation
(\ref{eq1}). The most efficient regularization methods are Tikhonov's method  of stabilizer functional, the quasi-solution method suggested by V.~K.~Ivanov,  M.~M.~Laverentiev perturbation method,  V.~A.~Morozov's discrepancy principle
and other methods. The principal role in the theory plays variational approaches, spectral theory, perturbation theory and functional analysis methods. 
V.P.~Maslov (here readers may refer to \cite{mas1968}) has 
established the equivalence of ill-posed problem solution existence and convergence of the  regularization process. 
There is the constant interest of   regularization methods to be applied  in interdisciplinary
research and applications related to signal and image processing,
numerical differentiation and inverse problems.

In this article we will address the  regularized processes construction by 
 introduction of the following perturbed equation
\begin{equation}
{A}x_{\alpha} +B(\alpha)x_{\alpha} = {f}. \label{eq4}
\end{equation}
In this paper we continue and upgrade the results  \cite{iv1978}, \cite{sid1976}, \cite{sid1982}.
It is to be noted that regularization method based on perturbed equation was first
proposed by M.~M.~Laverentiev \cite{lav1962} in case of completely continuous 
self-adjoint and positive operator $A$ and $B(\alpha) \equiv \alpha$.

Following \cite{sid2002} we select the stabilizing operator (SO) $B(\alpha)$
to make solution
$x_{\alpha}$ unique and provide computations stability.
Let us call   $\alpha  \in  S \subset \mathbb{R}^n$ as vector parameter of regularization.
Here $S$ is an open set,  zero belongs to the boundary of this set
(briefly, S-sectoral neighborhood of zero in  $\mathbb{R}^n$),
$\lim\limits_{S\ni \alpha \rightarrow 0} B(\alpha) = 0.$
Parameter $\alpha$
we  adjust to the data error level $\delta$.
Similar approach was suggested in the monographs \cite{sid1982}, \cite{sid2002},
but in this article the regularization parameter $\alpha$
can be vector. 
Previously only the simple case has been addressed with
$B(\alpha) = B_0 +\alpha B_1, \, \alpha \in \mathbb{R}^+$. 
Such SO has been employed in the development and justification of iterative
 methods of Fredholm points $ \lambda_0$ calculation,
zeros and the elements of the generalized Jordan sets of operator functions \cite{log1976, sid1978},
for the construction of approximate methods in the theory of branching of solutions of nonlinear
operator equations with parameters \cite{sid2002, sid1995, sid2010, sid2010a},
of  construction of solutions of differential-operator
equations with irreversible operator coefficient in the main part of  \cite{sid2002}.
In present article we propose  the novel  theory for
 operator systems regularization. 


The paper is organized as follows.
In Sec. 1 we obtained the sufficient conditions when
perturbed equation (\ref{eq4}) enables a regularization process.
In Sec. 2 we suggested the choice of SO $ B(\alpha). $
An important role is played by a classic theorem of Banach -- Steinhaus.
In Sec. 3 we consider the application of regularizing equation of the form (\ref{eq4})
  in the problem of stable differentiation.

\section{The fundamental theorem of regularization by the perturbation method}

Apart from equations (\ref{eq1}), (\ref{eq2}), (\ref{eq4}) let us introduce the equations
\begin{equation}
({A}x +B(\alpha))x = \tilde{f}, \label{eq11}
\end{equation}
\begin{equation}
(\tilde{A}x +B(\alpha))x = \tilde{f}. \label{eq12}
\end{equation}
Operator $B(\alpha)$  errors can be always included into the operator $\tilde{A}.$ 
 Equation (\ref{eq12}) we call regularized equation (RE) 
for the problem (\ref{eq2}). The following estimates are assumed to be fulfilled below
\begin{equation}
||({A} +B(\alpha))^{-1}|| \leq c(|\alpha|), \label{eq13}
\end{equation}
\begin{equation}
||B(\alpha)|| \leq d(|\alpha|), \label{eq14}
\end{equation}
where $c(|\alpha|)$ is continuous function, $\alpha \in S \subset \mathbb{R}^n,$
$0\in \overline{S},$
$ \lim\limits_{|\alpha| \rightarrow 0} c(|\alpha|) = \infty, \,\,   \lim\limits_{|\alpha| \rightarrow 0} d(|\alpha|) =0.$
If $x^*$ is the solution to  equation (\ref{eq1}), then  
$(A+B(\alpha))^{-1} f - x^* =  (A+B(\alpha))^{-1} B(\alpha) x^*.$
Therefore, we have
\begin{lemma} \label{lem1}
Let $x^*$ be some solution to the equation (\ref{eq1}), $x(\alpha)$ satisfy the equation (\ref{eq4}).  Then, in order to  $x_{\alpha} \rightarrow x^*$ for
$S \ni \alpha  \rightarrow 0$ it is necessary and sufficient to have the following 
equality fulfilled
\begin{equation}
S(\alpha,x^*) = ||(A + B(\alpha))^{-1} B(\alpha) x^*|| \rightarrow 0 \,\,\text{for} \,\,\, S \ni \alpha \rightarrow 0. \label{eq15}
\end{equation}
  \end{lemma}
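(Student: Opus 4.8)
The plan is to reduce the claimed ``necessary and sufficient'' statement to an exact identity, so that the two asymptotic conditions become literally the same statement rather than merely equivalent ones. First I would use hypothesis (\ref{eq13}): since $||(A+B(\alpha))^{-1}|| \leq c(|\alpha|) < \infty$ for each fixed $\alpha \in S$, the operator $A + B(\alpha)$ is boundedly invertible, so equation (\ref{eq4}) has the unique solution $x_\alpha = (A+B(\alpha))^{-1} f$. This is the only place where the standing hypotheses are actually used.

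The key step is a one-line algebraic manipulation. Writing $A = (A+B(\alpha)) - B(\alpha)$ gives the resolvent-type identity $(A+B(\alpha))^{-1} A = I - (A+B(\alpha))^{-1} B(\alpha)$. Since $x^*$ solves (\ref{eq1}) we may replace $f$ by $A x^*$, and then
\begin{equation}
\begin{split}
x_\alpha - x^* &= (A+B(\alpha))^{-1} f - x^* = (A+B(\alpha))^{-1} A x^* - x^* \\
&= -(A+B(\alpha))^{-1} B(\alpha) x^*,
\end{split}
\end{equation}
which is exactly the identity stated immediately before the lemma.

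Taking norms yields $||x_\alpha - x^*|| = ||(A+B(\alpha))^{-1} B(\alpha) x^*|| = S(\alpha,x^*)$ for every $\alpha \in S$. Because these two quantities coincide, the convergence $x_\alpha \rightarrow x^*$ as $S \ni \alpha \rightarrow 0$ is not merely implied by, but is identical to, the condition $S(\alpha,x^*) \rightarrow 0$, delivering necessity and sufficiency at once. I do not anticipate any genuine obstacle here: the entire content is the exact equality above, and the only subtlety worth stating explicitly is that $x_\alpha$ is well defined and unique, which (\ref{eq13}) guarantees. The quantitative bounds (\ref{eq13})--(\ref{eq14}) play no role in this lemma itself, but become essential afterwards, when one must estimate $S(\alpha,x^*)$ to exhibit a genuinely convergent regularizing family.
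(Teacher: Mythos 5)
Your proposal is correct and is essentially the paper's own argument: the paper proves the lemma by the same one-line identity $(A+B(\alpha))^{-1}f - x^* = \pm(A+B(\alpha))^{-1}B(\alpha)x^*$ stated just before the lemma, so that $\|x_\alpha - x^*\| = S(\alpha,x^*)$ and the equivalence is immediate. (Your sign is the correct one; the paper's displayed identity omits the minus sign, which is immaterial after taking norms.)
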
 
In \cite{leon13} there are sufficient conditions for ensuring  
estimates (\ref{eq13}) -- (\ref{eq14}), and examples addressing case of vector parameter. 
Application of such estimates for solving nonlinear equations are also considered.
Let us follow \cite{sid1982} and introduce the following definition.
\begin{definition} \label{def1}
We call the condition (\ref{eq15}) as {\it stabilization condition}, operator $B(\alpha),$
we call  {\it stabilization operator} if it satisfy the condition  (\ref{eq15}), and
solution $x^*$ we call {\it $B$-normal solution} of equation (\ref{eq1}).
\end{definition}
\begin{remark} 
Obviously  the  limit of the sequence $ \{x_{\alpha} \} $ is unique one in normed space  and therefore the equation (\ref{eq1}) can have only one  $B$-normal solution.
\label{vipre1}
\end{remark}
 From estimates (\ref{eq13}) -- (\ref{eq14}) it follows 
\begin{lemma}
Let  $x_{\alpha}$ and $\hat{x}_{\alpha}$ be solutions of the
equations  (\ref{eq4})   and
(\ref{eq11}) correspondingly.
 If parameter $\alpha = \alpha(\delta) \in S$ is selected such as $\delta \rightarrow 0$
\begin{equation}
|\alpha(\delta)| \rightarrow 0\,\, \text{and} \,\, \delta c(|\alpha(\delta)|) \rightarrow 0,
\label{eq16}
\end{equation}
 then $\lim\limits_{\delta \rightarrow 0} ||x_{\alpha} - \hat{x}_{\alpha} || = 0.$
\label{lem2}
\end{lemma}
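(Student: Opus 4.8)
The plan is to exploit that $x_{\alpha}$ and $\hat{x}_{\alpha}$ solve equations sharing the \emph{same} left-hand operator $A + B(\alpha)$ and differing only in their right-hand sides; their difference is then controlled directly by the data perturbation $f - \tilde{f}$ together with the resolvent bound (\ref{eq13}).

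First I would rewrite (\ref{eq4}) as $(A + B(\alpha))x_{\alpha} = f$ and (\ref{eq11}) as $(A + B(\alpha))\hat{x}_{\alpha} = \tilde{f}$, and subtract them to obtain
\[
(A + B(\alpha))(x_{\alpha} - \hat{x}_{\alpha}) = f - \tilde{f}.
\]
By (\ref{eq13}) the operator $A + B(\alpha)$ is boundedly invertible for every $\alpha \in S$, so applying its inverse yields
\[
x_{\alpha} - \hat{x}_{\alpha} = (A + B(\alpha))^{-1}(f - \tilde{f}).
\]
Next I would pass to norms, using the operator-norm inequality $||Tx|| \leq ||T||\,||x||$ together with (\ref{eq13}) and the data-error bound (\ref{eq3}):
\[
||x_{\alpha} - \hat{x}_{\alpha}|| \leq ||(A + B(\alpha))^{-1}|| \cdot ||f - \tilde{f}|| \leq c(|\alpha|)\,\delta .
\]
Specializing to $\alpha = \alpha(\delta)$ satisfying (\ref{eq16}) gives $\delta\, c(|\alpha(\delta)|) \to 0$ as $\delta \to 0$, whence $||x_{\alpha} - \hat{x}_{\alpha}|| \to 0$, which is the claim.

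I do not anticipate a genuine obstacle: the statement is a stability estimate for a fixed invertible operator, established by a single application of the bound above. The only point needing care is the bookkeeping on the error levels — since $||f - \tilde{f}|| \leq \delta_2 \leq \delta$, the single scalar $\delta$ already bounds the right-hand side and no reference to $\delta_1$ is required. I would also remark that, of the two conditions in (\ref{eq16}), only $\delta\, c(|\alpha(\delta)|) \to 0$ is actually used here; the companion requirement $|\alpha(\delta)| \to 0$ belongs to the larger regularization scheme, where it combines with the stabilization condition (\ref{eq15}) to force $x_{\alpha} \to x^{*}$. The estimate itself reflects the familiar tension in which $c(|\alpha|) \to \infty$ as $|\alpha| \to 0$ must not outrun the vanishing of the data error $\delta$.
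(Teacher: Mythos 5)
Your proof is correct and is exactly the argument the paper intends: the paper states Lemma \ref{lem2} with no written proof beyond the remark that it ``follows from estimates (\ref{eq13})--(\ref{eq14})'', and the one-line subtraction $x_{\alpha}-\hat{x}_{\alpha}=(A+B(\alpha))^{-1}(f-\tilde{f})$ followed by the bound $c(|\alpha|)\delta$ is the standard way to fill that in. Your side remarks (only the second half of (\ref{eq16}) is used here; $\delta_2\leq\delta$ suffices) are accurate.
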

\begin{definition}
Condition (\ref{eq16}) we  call the coordination condition of vector
parameter $\alpha$  with error level $\delta.$
\label{def12}
\end{definition}
The coordination conditions play principal role in all regularization methods
for  ill-posed problems (here readers may refer e.g. to \cite{iv1978, bak1968, lav1962,sid1980, sid1982,  mar1973, sid2002,  tren1977}).
The coordination condition is assumed to be fulfilled. Below we also assume 
  $ \alpha $ depends on $ \delta $ but for the sake of brevity we omit this fact.
\begin{lemma}
Let the estimates (\ref{eq13}) -- (\ref{eq14}) be satisfied as well as coordination condition 
for the regularization parameter
(\ref{eq16}). Next we select $q \in (0,1)$
 and find $\delta >0$ such as for $\delta \leq \delta_0$  the following inequality
\begin{equation}
\delta  c(|\alpha|)) \leq q
\label{eq17}
\end{equation}
will be fulfilled.
Then   $\tilde{A} +B(\alpha)$   is continuously invertible operator and the following estimates
are fulfilled
\begin{equation}
||(\tilde{A}+B(\alpha))^{-1} || \leq \frac{||(A+B(\alpha))^{-1}||}{1-q},
\label{eq18}
\end{equation}
\begin{equation}
||(\tilde{A}+B(\alpha))^{-1} f || \leq ||(A+B(\alpha))^{-1} f|| + \delta \frac{c(|\alpha|)}{1-q}
||(A+B(\alpha))^{-1} f||.
\label{eq19}
\end{equation}
\label{lem3}
\end{lemma}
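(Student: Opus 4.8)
The plan is to treat $\tilde{A}+B(\alpha)$ as a small perturbation of the already invertible operator $A+B(\alpha)$ and to invoke the Banach lemma on operators lying close to an invertible one. First I would note that the coordination condition (\ref{eq16}) is exactly what secures the threshold $\delta_0$: since $\delta c(|\alpha(\delta)|)\to 0$ as $\delta\to 0$, for the fixed $q\in(0,1)$ one can choose $\delta_0$ so small that (\ref{eq17}) holds for all $\delta\leq\delta_0$. This reduces the lemma to a perturbation estimate at fixed $\alpha$, with the perturbation guaranteed to be a contraction.

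Next I would record the algebraic factorization
$$\tilde{A}+B(\alpha)=(A+B(\alpha))\bigl(I+(A+B(\alpha))^{-1}(\tilde{A}-A)\bigr),$$
which isolates the entire perturbation in the second factor. Using (\ref{eq13}), the bound $||\tilde{A}-A||\leq\delta$ from (\ref{eq3}), and (\ref{eq17}),
$$||(A+B(\alpha))^{-1}(\tilde{A}-A)||\leq c(|\alpha|)\,\delta\leq q<1,$$
so the corresponding Neumann series converges and the second factor is continuously invertible with the norm of its inverse bounded by $1/(1-q)$. Hence $\tilde{A}+B(\alpha)$ is a product of two continuously invertible operators, which settles the first assertion, and
$$(\tilde{A}+B(\alpha))^{-1}=\bigl(I+(A+B(\alpha))^{-1}(\tilde{A}-A)\bigr)^{-1}(A+B(\alpha))^{-1}.$$
Taking norms and using submultiplicativity yields (\ref{eq18}) at once.

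For (\ref{eq19}) I would exploit the resolvent-type identity
$$(\tilde{A}+B(\alpha))^{-1}-(A+B(\alpha))^{-1}=(\tilde{A}+B(\alpha))^{-1}(A-\tilde{A})(A+B(\alpha))^{-1},$$
apply both sides to $f$, and estimate the correction term: the factor $||A-\tilde{A}||$ contributes $\delta$, the factor $||(\tilde{A}+B(\alpha))^{-1}||$ is controlled through (\ref{eq18}) and then bounded by $c(|\alpha|)/(1-q)$ after using $||(A+B(\alpha))^{-1}||\leq c(|\alpha|)$, while the remaining $(A+B(\alpha))^{-1}f$ is left intact so as to reproduce the term $||(A+B(\alpha))^{-1}f||$. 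Collecting the pieces gives (\ref{eq19}).

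The routine content is just the Neumann-series bookkeeping; there is no genuine obstacle here, since the coordination condition was arranged precisely to keep the perturbation contractive. The only point demanding a little care is the bookkeeping of \emph{which} bound is substituted and \emph{when}---in particular, replacing $||(A+B(\alpha))^{-1}||$ by $c(|\alpha|)$ at the right moment in the estimate of $||(\tilde{A}+B(\alpha))^{-1}||$---so that the result emerges in the clean coefficient form of (\ref{eq18})--(\ref{eq19}) rather than with the operator norm lingering in the constant.
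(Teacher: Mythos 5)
Your proposal is correct and follows essentially the same route as the paper: the Neumann-series (Banach lemma) argument applied to the factorization of $\tilde{A}+B(\alpha)$ through $A+B(\alpha)$ for invertibility and (\ref{eq18}), and a resolvent-type identity for (\ref{eq19}). The only differences are cosmetic---you place the perturbation factor on the right of $A+B(\alpha)$ where the paper places it on the left, and you use the compact identity $C^{-1}-D^{-1}=C^{-1}(D-C)D^{-1}$ where the paper uses its expanded form $C^{-1}=D^{-1}-D^{-1}(I+(C-D)D^{-1})^{-1}(C-D)D^{-1}$; both yield the same bounds.
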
 
\begin{proof}
Based on  estimate (\ref{eq3})  for $\forall f$
 we have
\begin{equation}
||(\tilde{A} - A)(A+B(\alpha))^{-1} f||  
\leq \delta  ||(A+B(\alpha))^{-1} f||.
\label{eq110}
\end{equation}
Hence taking into account the estimates (\ref{eq13}), (\ref{eq14}), (\ref{eq17}), 
we have the following inequality
\begin{equation}
||(\tilde{A}-A) (A+B(\alpha))^{-1} f|| \leq \delta  c(|\alpha|)  \leq q ||f ||.
\label{eq111}
\end{equation}
Now since $q\leq 1$ we have
$\tilde{A} +B(\alpha) = (I +(\tilde{A} - A) (A+B(\alpha))^{-1}) (A+B(\alpha)),$
then existence of inverse operator $(\tilde{A} + B(\alpha))^{-1},$
as well as estimate (\ref{eq18}) follows from known inverse operator  Th.. 
Next we employ the following operator
identity $C^{-1} = D^{-1}-D^{-1}(I+(C-D)D^{-1})^{-1}(C-D) D^{-1}$
where $C=(\tilde{A}+B(\alpha)),\, D=A+B(\alpha)$
and, based on inequalities (\ref{eq110}), (\ref{eq111}) we get estimate (\ref{eq19}).
\end{proof}
\begin{theorem}[Main  Theorem]  
 Let conditions of Lemma  \ref{lem3} be fulfilled, i.e
 parameter $\alpha$ is coordinated  with noise level $\delta$. 
Then RE
(\ref{eq12}) has a unique solution $\tilde{x}_{\alpha}.$
Moreover if in addition  $x^*$ is solution of the exact  equation (\ref{eq1}),
then the following estimate is fulfilled
\begin{equation}
||\tilde{x}_{\alpha}-x^*|| \leq S(\alpha,x^*) +\frac{\delta c(|\alpha|)}{1-q}
\biggl(1+ ||x^*||  +
  S (\alpha, x^*) \biggr).
\label{eq113}
\end{equation} 
If also  $x^*$ is  $B$-normal solution of the equation 
(\ref{eq1}) then  $\{\tilde{x}_{\alpha}\}$  converges to  $x^*$  at a rate determined by bound (\ref{eq113}) as $\delta \rightarrow 0$.

   \label{th1}
\end{theorem}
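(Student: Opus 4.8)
The plan is to separate the statement into the unique-solvability part and the error estimate, and to obtain convergence as a corollary of the estimate. Unique solvability is immediate from Lemma~\ref{lem3}: once the coordination condition and (\ref{eq17}) hold, the operator $\tilde{A}+B(\alpha)$ is continuously invertible, so the RE (\ref{eq12}) has the single solution $\tilde{x}_\alpha=(\tilde{A}+B(\alpha))^{-1}\tilde{f}$.

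For the bound (\ref{eq113}) I would start from the exact relation $f=Ax^*$ and express the defect of $x^*$ in the perturbed operator. Substituting $\tilde{f}=(\tilde{A}+B(\alpha))\tilde{x}_\alpha$ and regrouping yields
\[
\tilde{x}_\alpha-x^*=(\tilde{A}+B(\alpha))^{-1}\bigl[(\tilde{f}-f)-(\tilde{A}-A)x^*-B(\alpha)x^*\bigr].
\]
A triangle inequality then splits the right-hand side into three pieces. The first two, coming from $(\tilde{A}+B(\alpha))^{-1}(\tilde{f}-f)$ and $(\tilde{A}+B(\alpha))^{-1}(\tilde{A}-A)x^*$, are handled directly by the operator estimate (\ref{eq18}) together with the data bounds (\ref{eq3}); they contribute $\frac{\delta c(|\alpha|)}{1-q}$ and $\frac{\delta c(|\alpha|)}{1-q}\|x^*\|$, matching the terms $1$ and $\|x^*\|$ inside the bracket of (\ref{eq113}).

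The delicate piece is the third, $\|(\tilde{A}+B(\alpha))^{-1}B(\alpha)x^*\|$, which must be turned into the stabilization quantity $S(\alpha,x^*)=\|(A+B(\alpha))^{-1}B(\alpha)x^*\|$ rather than crudely bounded by $\|B(\alpha)\|\,\|x^*\|$; the crude bound would ignore $B$-normality and destroy convergence. Here I would apply the resolvent identity $C^{-1}-D^{-1}=-C^{-1}(C-D)D^{-1}$ with $C=\tilde{A}+B(\alpha)$, $D=A+B(\alpha)$, $C-D=\tilde{A}-A$, to the vector $B(\alpha)x^*$, obtaining
\[
(\tilde{A}+B(\alpha))^{-1}B(\alpha)x^*=(A+B(\alpha))^{-1}B(\alpha)x^*-(\tilde{A}+B(\alpha))^{-1}(\tilde{A}-A)(A+B(\alpha))^{-1}B(\alpha)x^*.
\]
Taking norms and invoking (\ref{eq18}) and (\ref{eq3}) bounds this term by $S(\alpha,x^*)+\frac{\delta c(|\alpha|)}{1-q}S(\alpha,x^*)$. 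Adding the three contributions reproduces (\ref{eq113}) exactly.

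Convergence is then read off from (\ref{eq113}): when $x^*$ is $B$-normal the stabilization condition (\ref{eq15}) gives $S(\alpha,x^*)\to 0$ as $S\ni\alpha\to 0$, while the coordination condition (\ref{eq16}) forces $|\alpha|\to 0$ and $\delta c(|\alpha|)\to 0$; hence every summand on the right of (\ref{eq113}) vanishes in the limit and $\tilde{x}_\alpha\to x^*$ at the rate dictated by the bound. I expect the only real obstacle to be the conversion of the $B(\alpha)x^*$ term into $S(\alpha,x^*)$ through the resolvent identity; the rest is a routine combination of Lemma~\ref{lem3} and the triangle inequality.
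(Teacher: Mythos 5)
Your proposal is correct and follows essentially the same route as the paper: the same decomposition $(\tilde{A}+B(\alpha))(\tilde{x}_\alpha-x^*)=\tilde f-f-(\tilde A-A)x^*-B(\alpha)x^*$, the bound (\ref{eq18}) for the data-error terms, and a resolvent identity to convert $\|(\tilde A+B(\alpha))^{-1}B(\alpha)x^*\|$ into $S(\alpha,x^*)\bigl(1+\tfrac{\delta c(|\alpha|)}{1-q}\bigr)$, which is exactly the content of estimate (\ref{eq19}) that the paper invokes from Lemma~\ref{lem3}. The only cosmetic difference is that you re-derive (\ref{eq19}) inline via $C^{-1}-D^{-1}=-C^{-1}(C-D)D^{-1}$ instead of citing it.
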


\begin{proof}
Existence and uniqueness of the sequence $\{ \tilde{x}_{\alpha}\}$
as solution of RE (\ref{eq12}) for $\alpha \in S$
proved in  Lemma \ref{lem3}. Since
$(\tilde{A} +B(\alpha))(\tilde{x}_{\alpha} - x^*) = \tilde{f} - f- (\tilde{A}-A) x^* -B(\alpha)x^*, $
then we get the desired bound  (\ref{eq113})
$||\tilde{x}_{\alpha} - x^*|| \leq ||(\tilde{A} +B(\alpha))^{-1}||  ( ||\tilde{f}-f||+
||(\tilde{A}-A)x^*|| + ||(\tilde{A} + B(\alpha))^{-1} B(\alpha) x^*|| )
\leq S(\alpha, x^*) + \frac{\delta c(|\alpha|)}{1-q}  \biggl( 1  + ||x^*||  + S(\alpha, x^*) \biggr)$
based on the proved estimates (\ref{eq18}), (\ref{eq19})
and  (\ref{eq14}).
Since $x^*$ is $B$-normal solution then  $\lim\limits_{\alpha \rightarrow 0}
S(\alpha,x^*) = 0.$  And thanks to parameter $\alpha$ coordinated with noise level 
 $\delta$ we have  $\lim\limits_{\delta \rightarrow 0} \delta c (|\alpha|) = 0.$ 
Hence, due to bound  (\ref{eq113})
$\lim\limits_{\delta \rightarrow 0} ||\tilde{x}_{\alpha}-x^*||=0$
which completes the proof.
\end{proof}
As footnote of the section it's to be mentioned that for practical applications of this  theorem one needs
recommendations on the choice of SO $ B (\alpha) $ and 
$B$-normal solution existence conditions. It's also useful to know the necessary and sufficient
conditions of the existence of  $B$-normal solutions $ x^* $ to the exact equation (\ref{eq1}).~These~issues~we~discuss~below.
\section{Stabilizing  operator $B(\alpha)$ selection,  
   $B$-normal solutions existence and correctness class of  problem
 (\ref{eq1})}
If $A$ is Fredholm operator, $\{\phi_i\}_1^n$ is basis in  $\mathcal{N}(A),$  $\{\psi_i \}_1^n$ is basis in $\mathcal{N}^*(A),$
then (here readers may refer to  Sec. 22 in  textbook  \cite{trenogin}),
one may assume $B(\alpha) \equiv \sum\limits_{i=1}^n \langle \cdot , \gamma_i \rangle z_i,$
where $\{\gamma_i \}, \, \{z_i \}$ are selected such as 
$\det [ \langle \phi_i, \gamma_k \rangle ]_{i,k=1}^n  \neq 0,$
$\langle z_i, \psi_i \rangle = \begin{cases} 1 &\mbox{if } i = k \\ 
0 & \mbox{if } i \neq k \end{cases} $
 herewith
the equation
\begin{equation}
Ax = f - \sum\limits_{i=1}^n \langle f, \psi_i \rangle z_i
\label{eqq21}
\end{equation}
is resolvable for arbitrary source function $f.$
Let us now recall $\tilde{f},$ which is $\delta$-approximation of  $f.$
     Then perturbed equation 
${A}x + \sum\limits_{i=1}^n \langle x, \gamma_i \rangle z_i = \tilde{f} - \sum\limits_{i=1}^n \langle \tilde{f}, \psi_i \rangle z_i$
has unique solution $\tilde{x}$ such as $||\tilde{x} - x^*|| \rightarrow 0$
 for $\delta \rightarrow 0,$ where $x^*$ is unique solution of exact solution (\ref{eqq21})
for which 
$\langle x^*, \gamma_i \rangle = 0, \, \, i=\overline{1,n}. $
Thus, in the case of a Fredholm operator $ A $ as a stabilizing
 operator one can take finite-dimensional operator
$B=\sum\limits_1^n\langle \cdot, \gamma_i \rangle z_i$
 which does not depend on the parameter $ \alpha.$
That is the regularization of iterative methods we employed in our papers
\cite{sid2010, sid2010a}, \cite{sid2002,sid2014} for the second order
nonlinear equations studies with parameters.
Of course, with this choice of SO $B$ it is required to have information about the kernel of the operator $ A $
and its defect subspace. Therefore, it is of interest to give recommendations on the choice  of SO $ B (\alpha) $ without the use of such information.
It is important to consider in a more complex problem solving the first kind equations,
when the range of the operator $ A $ is not closed.
It is to be noted that in papers
\cite{sid1976, sid1980} and in the monograph \cite{sid1982, sid2002}  we 
constructed the SO for the first kind equations as $B_0 + \alpha B_1$
where $\alpha \in \mathbb{R}^+.$
Below we consider the generalization of such results when  $B=B(\alpha),$
 $\alpha \in S \subset \mathbb{R}^n.$
Our previous results presented in papers
\cite{sid1980, sid1976, sid1982} follows from the proved  Th. 2 and 3 as
special cases.
 
\begin{theorem}
Let
$||(A+B(\alpha))^{-1}|| \leq c(|\alpha|), \, ||B(\alpha)||\leq d(|\alpha|)$
for
$\alpha \in S \subset \mathbb{R}^n,$
$c(|\alpha|), \, d(|\alpha|)$ are continuous functions, $\lim\limits_{|\alpha| \rightarrow 0}
c(|\alpha|) = \infty,$ $\lim\limits_{|\alpha| \rightarrow 0} d(|\alpha|) = 0.$
Let $\lim\limits_{|\alpha|\rightarrow 0} c(|\alpha|) d (|\alpha|) < \infty,$
$\mathcal{N}(A) = {0},$  $\overline{{R(A)}}=Y.$
Then unique solution $x^*$ of equation (\ref{eq1})    
is  $B$-normal solution and operator $B(\alpha)$ is its SO.
\label{th2}
\end{theorem}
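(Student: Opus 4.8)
The statement to be proved is exactly the stabilization condition of Definition~\ref{def1} for the unique solution of (\ref{eq1}), so by Lemma~\ref{lem1} it is equivalent to $x_\alpha \to x^*$, and everything reduces to showing that $S(\alpha,x^*)=\|(A+B(\alpha))^{-1}B(\alpha)x^*\|\to 0$ as $S\ni\alpha\to 0$. First I would note that $\mathcal{N}(A)=\{0\}$ makes the solution $x^*$ of (\ref{eq1}) unique, so the target is well defined, and I would single out the operator family
\[
T(\alpha):=(A+B(\alpha))^{-1}B(\alpha)=I-(A+B(\alpha))^{-1}A,
\]
whose value at $x^*$ is precisely $S(\alpha,x^*)$ and which must be driven to zero. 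The whole plan is a Banach--Steinhaus argument, which is why that theorem is advertised in the introduction.

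Because $X$ and $Y$ may differ while $\overline{R(A)}=Y$ is a density statement in $Y$, I would not apply the principle to $T(\alpha)$ on $X$ directly but to the companion family $Q(\alpha):=A(A+B(\alpha))^{-1}=I-B(\alpha)(A+B(\alpha))^{-1}$ acting on $Y$. The first step is uniform boundedness: by (\ref{eq13})--(\ref{eq14}) one has $\|Q(\alpha)\|\le 1+c(|\alpha|)d(|\alpha|)$, and the extra hypothesis $\lim_{|\alpha|\to 0}c(|\alpha|)d(|\alpha|)<\infty$ is exactly what yields $\sup_\alpha\|Q(\alpha)\|<\infty$ near the origin. This is the only place the product condition enters, and it is essential since $c(|\alpha|)\to\infty$ alone would let the family blow up. The second step is pointwise convergence on the dense set $R(A)$: for $y=Aw$ one computes $Q(\alpha)y-y=-A\,T(\alpha)w=-B(\alpha)\bigl(w-T(\alpha)w\bigr)$, and since $\|T(\alpha)w\|\le c(|\alpha|)d(|\alpha|)\|w\|$ is bounded while $\|B(\alpha)\|\le d(|\alpha|)\to 0$, the right-hand side tends to $0$. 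The net form of Banach--Steinhaus (a three-$\varepsilon$ estimate using the uniform bound) then propagates this to all of $Y$, and specializing to $y=f=Ax^*$ gives $Q(\alpha)f-f=-A\,T(\alpha)x^*\to 0$, i.e. $A\,T(\alpha)x^*\to 0$ in $Y$.

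The hard part will be the final transfer back to $X$: from $A\,T(\alpha)x^*\to 0$ together with the uniform bound $\|T(\alpha)x^*\|\le\sup_\alpha c(|\alpha|)d(|\alpha|)\,\|x^*\|$ one must conclude $T(\alpha)x^*\to 0$, which is $S(\alpha,x^*)\to 0$. This is precisely the step where the injectivity $\mathcal{N}(A)=\{0\}$ has to be used, and it is the delicate point, because for a general bounded $A$ with non-closed range the image of a bounded net tending to zero does not by itself force the net to tend to zero; here is where the structure of $A$ must be exploited (and where the self-adjoint positive case of Lavrentiev closes the argument via spectral calculus). Once $S(\alpha,x^*)\to 0$ is secured, $x^*$ is $B$-normal by Definition~\ref{def1}, $B(\alpha)$ is its stabilizing operator, and the convergence with the rate of Theorem~\ref{th1} follows since $\alpha$ is coordinated with $\delta$.
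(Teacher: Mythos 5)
Your reduction via Lemma~\ref{lem1} to proving $S(\alpha,x^*)=\|(A+B(\alpha))^{-1}B(\alpha)x^*\|\to 0$ is correct, and the observation that the product hypothesis $\sup_\alpha c(|\alpha|)d(|\alpha|)<\infty$ is what supplies the uniform bound for a Banach--Steinhaus argument is exactly the right instinct. But the route you chose --- applying Banach--Steinhaus to $Q(\alpha)=A(A+B(\alpha))^{-1}$ on $Y$ --- dead-ends precisely where you flag ``the hard part,'' and that is a genuine gap, not a deferred technicality. What you obtain is $A\bigl(T(\alpha)x^*\bigr)\to 0$ in $Y$ with $T(\alpha)x^*:=(A+B(\alpha))^{-1}B(\alpha)x^*$ bounded in $X$. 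Since $R(A)$ is not assumed closed, $A^{-1}$ is unbounded on $R(A)$, and injectivity gives you nothing: one can always find bounded nets $u_\alpha$ with $Au_\alpha\to 0$ and $\|u_\alpha\|=1$. The theorem assumes only a bounded injective $A$ with dense range, so there is no additional ``structure of $A$'' (no self-adjointness, no spectral calculus) available to close this step. As written, the proposal does not prove the theorem.

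The paper avoids this obstruction by running Banach--Steinhaus on the family $T(\alpha)=(A+B(\alpha))^{-1}B(\alpha)$ in $X$ itself rather than on $Q(\alpha)$ in $Y$. For any $x$ with $B(\alpha)x\in R(A)$, write $B(\alpha)x=Ax_1(\alpha)$; the identity
\begin{equation*}
(A+B(\alpha))^{-1}Ax_1(\alpha)=x_1(\alpha)-(A+B(\alpha))^{-1}B(\alpha)x_1(\alpha)
\end{equation*}
gives $T(\alpha)x=x_1(\alpha)-T(\alpha)x_1(\alpha)$, and both terms tend to zero (the second because $\|T(\alpha)x_1(\alpha)\|\le c(|\alpha|)d(|\alpha|)\|x_1(\alpha)\|$ with the product bounded). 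Together with $\sup_\alpha\|T(\alpha)\|\le\sup_\alpha c(|\alpha|)d(|\alpha|)<\infty$, Banach--Steinhaus extends the pointwise convergence $T(\alpha)x\to 0$ from the manifold $L_0=\{x\,|\,B(\alpha)x\in R(A)\}$ to its closure, which covers $x^*$ because $\overline{R(A)}=Y$. The essential difference is that this version of the argument never requires inverting $A$ along a convergent net: the transfer back to $X$ is built into the algebraic identity, since $x_1(\alpha)$ is produced as a preimage from the start rather than recovered at the end. If you reorganize your proof around $T(\alpha)$ and the dense manifold $L_0$, your uniform-boundedness and density steps carry over essentially unchanged.
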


\begin{proof}
First, lets  $B(\alpha) x^* \in \mathbb{R}(A)$ for $\alpha \in S.$
Then exists element
 $x_1(\alpha)$  such as $A x_1(\alpha) = B(\alpha) x^*.$
Then 
$(A+B(\alpha))^{-1} B(\alpha) x^* = (A+B(\alpha))^{-1} (A x_1(\alpha) + B(\alpha) x_1(\alpha) - B(\alpha)x_1(\alpha) ) 
= x_1(\alpha) - (A+B(\alpha))^{-1} B(\alpha) x_1(\alpha).$
Since $B(0)=0,$ $\mathcal{N}(A)=\{0\}$ then $\lim\limits_{S\ni \alpha \rightarrow 0} x_1(\alpha) = 0.$
It is to be noted that by condition $||(A+B(\alpha))^{-1} B(\alpha)|| \leq c(|\alpha|) d (|\alpha|),$
where  $c(|\alpha|) d (|\alpha|)$ is continuous function such as the limit 
$\lim\limits_{|\alpha| \rightarrow 0} c(|\alpha|) d (|\alpha|)$ is finite.
Then $\alpha$-sequence $\{ ||(A+B(\alpha))^{-1}B(\alpha) x^* || \}$
infinitesimal when $S \ni \alpha \rightarrow 0.$
The sequence of operators  $\{ (A+B(\alpha))^{-1} B(\alpha) \}$
converges pointwise to the zero operator on the linear manifold
 $L_0 = \{ x \,|\, B(\alpha) x \in {R}(A) \}.$
Thus, we have proved that the  Th. is true when
$B(\alpha) x^* \in {R}(A).$
Because by condition $\sup\limits_{\alpha \in S} c(|\alpha|) d(|\alpha|) < \infty$
then $\alpha$-sequence $\left\{ ||(A+B(\alpha))^{-1} B(\alpha)|| \right\}$
 is bounded. Therefore, the
sequence of linear operators
$\{ (A+B(\alpha))^{-1} B(\alpha) \}$  in space  $X$
converges pointwise to the zero operator on the linear manifold
$L_0 = \{ x \bigl | B(\alpha)  x \in {{R}(A)} \}.$
But then, on the basis of the Banach -- Steinhaus  Theorem we
have pointwise convergence of this operators sequence   to the zero operator
and on the closure
$\overline{L_0},$
i.e. when $B(\alpha) x^* \in \overline{{R}(A)}.$
 Since $\overline{{R}(A)}=Y$ and $B(\alpha) \in \mathcal{L}(X \rightarrow Y),$
then  $B(\alpha)x^* \in Y$ and Th. 2 is proved.

\end{proof}
The conditions of  Th. 2 can be relaxed
(here readers may refer to Cor. 1 and  Th. 3).
\begin{corollary}
If $\overline{{R}(A)} \subset Y,$ $\lim\limits_{S\ni \alpha \rightarrow 0} x_1(\alpha) =0$  then solution $x^*$ of exact equation (\ref{eq1})
is $B$-normal iff $B(\alpha)x^* \in \overline{{R}(A)}.$
\end{corollary}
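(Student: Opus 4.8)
The plan is to treat this as the two-sided refinement of Theorem~\ref{th2}: one keeps the algebraic identity and the Banach--Steinhaus step of that proof, drops the density requirement $\overline{R(A)}=Y$, and promotes $x_{1}(\alpha)\to 0$ to a hypothesis so that the sufficient condition of Theorem~\ref{th2} becomes a criterion. I would phrase everything through the manifold $L_{0}=\{x\mid B(\alpha)x\in R(A)\}$ and the operators $T_{\alpha}=(A+B(\alpha))^{-1}B(\alpha)$, which by (\ref{eq13})--(\ref{eq14}) are uniformly bounded, $\|T_{\alpha}\|\le c(|\alpha|)d(|\alpha|)$ with $\sup_{\alpha\in S}c(|\alpha|)d(|\alpha|)<\infty$. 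The identification of $\overline{L_{0}}$ with the set where $B(\alpha)x^{*}\in\overline{R(A)}$, already used at the end of the proof of Theorem~\ref{th2}, is what turns the two inclusions below into the stated ``iff''.

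For sufficiency I would repeat the proof of Theorem~\ref{th2} almost verbatim, the only change being that $\overline{R(A)}$ is no longer all of $Y$. On $L_{0}$ the identity $T_{\alpha}x^{*}=x_{1}(\alpha)-T_{\alpha}x_{1}(\alpha)$ holds, and since $x_{1}(\alpha)\to 0$ by hypothesis and $\{T_{\alpha}\}$ is uniformly bounded, $S(\alpha,x^{*})\to 0$ for $x^{*}\in L_{0}$. Uniform boundedness makes the set $\{x\mid S(\alpha,x)\to 0\}$ a closed subspace, so the convergence extends from $L_{0}$ to $\overline{L_{0}}$; hence $B(\alpha)x^{*}\in\overline{R(A)}$ forces $x^{*}$ to be $B$-normal.

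The substantive half is necessity, and here I expect a short direct computation to carry the argument. If $x^{*}$ is $B$-normal then $u_{\alpha}:=T_{\alpha}x^{*}\to 0$, and applying $A+B(\alpha)$ to $u_{\alpha}$ gives $B(\alpha)x^{*}=Au_{\alpha}+B(\alpha)u_{\alpha}$, that is $B(\alpha)(x^{*}-u_{\alpha})=Au_{\alpha}\in R(A)$. Thus each $x^{*}-u_{\alpha}$ lies in $L_{0}$, while $x^{*}-u_{\alpha}\to x^{*}$ because $u_{\alpha}\to 0$; therefore $x^{*}\in\overline{L_{0}}$, and by continuity of $B(\alpha)$ together with closedness of $\overline{R(A)}$ one obtains $B(\alpha)x^{*}\in\overline{R(A)}$, as required.

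I expect the main obstacle to be not either inclusion in isolation but the bookkeeping that makes them match the statement: the equality $\overline{L_{0}}=\{x\mid B(\alpha)x\in\overline{R(A)}\}$ is only transparent in the direction supplied by continuity of $B(\alpha)$, and the step ``$B(\alpha)(x^{*}-u_{\alpha})\in R(A)$ implies $x^{*}-u_{\alpha}\in L_{0}$'' tacitly uses that the manifold $L_{0}$ does not depend on the particular $\alpha$ --- a property guaranteed by the structural form of $B(\alpha)$ employed in this paper but which should be stated explicitly. I would therefore isolate this $\alpha$-independence of $L_{0}$ as a standing assumption, after which both the Banach--Steinhaus extension and the necessity computation close cleanly.
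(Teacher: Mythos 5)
The paper never writes out a proof of this corollary, so the only text to compare against is the proof of Theorem~2; your sufficiency half reproduces that argument essentially verbatim (the identity $T_{\alpha}x=x_{1}(\alpha)-T_{\alpha}x_{1}(\alpha)$ on $L_{0}$, uniform boundedness of $T_{\alpha}=(A+B(\alpha))^{-1}B(\alpha)$ via $c(|\alpha|)d(|\alpha|)$, Banach--Steinhaus to pass to $\overline{L_{0}}$), with the hypothesis $x_{1}(\alpha)\to 0$ replacing the role played by $\mathcal{N}(A)=\{0\}$ there, exactly as intended. The necessity half is your own contribution: the computation $(A+B(\alpha))u_{\alpha}=B(\alpha)x^{*}$, hence $B(\alpha)(x^{*}-u_{\alpha})=Au_{\alpha}\in R(A)$ with $x^{*}-u_{\alpha}\to x^{*}$, appears nowhere in the paper, and it is precisely the missing ingredient that upgrades the one-directional argument of Theorem~2 to the ``iff'' claimed in the corollary; the argument is correct. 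Your two caveats also identify the genuine soft spots of the statement as printed: the sufficiency direction needs the inclusion $\{x\mid B(\alpha)x\in\overline{R(A)}\}\subseteq\overline{L_{0}}$, which is not automatic for an arbitrary bounded $B(\alpha)$ (a rank-one $B$ sending some vector to a point of $\overline{R(A)}\setminus R(A)$ makes $L_{0}$ a closed hyperplane while $L$ is all of $X$) and which the paper already assumes silently at the end of the proof of Theorem~2; the necessity direction needs $L_{0}$ to be independent of $\alpha$. Both properties do hold for the operators $B(\alpha)=\alpha B$ of Theorem~3 and for $B(\alpha)=\alpha I$ in the differentiation application, where $L_{0}=R(A)$ and $L=\overline{R(A)}=\overline{L_{0}}$, so isolating them as standing structural assumptions, as you propose, is the right way to make the corollary rigorous rather than a defect of your argument.
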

\begin{remark}
In Cor. 1  condition  $\mathcal{N}(A) = \{ 0\}$  is not used.
The set $L = \{ x | B(\alpha) x \in \overline{{R}(A)} \}$
in conditions of Cor.1 defines maximum correctness class.
\end{remark}
It is to be noted that in  Th. 2 we used the assumption on
the finite limit
$\lim\limits_{S\ni \alpha \rightarrow 0} ||(A+B(\alpha))^{-1}||  ||B(\alpha)||.$ 
We can also relax this limitation.

\begin{theorem}
Let $||(A+\alpha B)^{-1}|| \leq c(\alpha),$
where $\alpha \in {\mathbb R}^1,$ $c(\alpha): (0, \alpha_0] \rightarrow \mathbb{R}^+$ is continuous function.
Suppose that there is a positive integer $n \geq 1$ such as  $\lim\limits_{\alpha \rightarrow 0} c(\alpha) \alpha^i = \infty, \, i=\overline{0,n-1},$
$\lim\limits_{\alpha \rightarrow 0} c(\alpha) \alpha^n < \infty$. 
Let $x_0$  satisfies the equation (\ref{eq1}) and in case of $n\geq 2$
there exist $x_1, \cdots , x_{n-1}$  which satisfy the sequence of equations
$
A x_i = B x_{i-1},\,\, i=1,\cdots , n-1.
$
Then $x_0$  is $B$-normal solution to equation (\ref{eq1})
iff $B x_{n-1} \in \overline{{R}(A)}.$
\label{th2}
\end{theorem}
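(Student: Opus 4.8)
The plan is to abbreviate $R_\alpha:=(A+\alpha B)^{-1}$ and to note that, since here $B(\alpha)=\alpha B$, the $B$-normality of $x_0$ is by definition the single requirement $S(\alpha,x_0)=\|R_\alpha\,\alpha B x_0\|\to 0$ as $\alpha\to 0$. The first step I would carry out is a telescoping identity generated by the chain $Ax_i=Bx_{i-1}$. Writing $Bx_{i-1}=Ax_i=(A+\alpha B)x_i-\alpha Bx_i$ and applying $R_\alpha$ gives $R_\alpha(\alpha^{i}Bx_{i-1})=\alpha^{i}x_i-R_\alpha(\alpha^{i+1}Bx_i)$; iterating for $i=1,\dots,n-1$ yields
\[
R_\alpha\,\alpha B x_0=\sum_{j=1}^{n-1}(-1)^{j-1}\alpha^{j}x_j+(-1)^{n-1}\alpha^{n}R_\alpha Bx_{n-1}.
\]
Each $x_j$ is fixed and $j\ge 1$, so the finite sum tends to $0$; hence the whole statement reduces to the equivalence $S(\alpha,x_0)\to 0 \Longleftrightarrow \alpha^{n}R_\alpha Bx_{n-1}\to 0$. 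This reduction is the backbone of both implications.

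For sufficiency ($Bx_{n-1}\in\overline{R(A)}\Rightarrow x_0$ is $B$-normal) I would study the family $T_\alpha:=\alpha^{n}R_\alpha$. The hypothesis $\lim_{\alpha\to0}c(\alpha)\alpha^{n}<\infty$ together with continuity of $c$ gives a uniform bound $\|T_\alpha\|\le \alpha^{n}c(\alpha)\le M$ on $(0,\alpha_0]$. On $R(A)$ these operators converge to zero: for $y=Az$ one computes $T_\alpha y=\alpha^{n}z-\alpha\,T_\alpha Bz$, where $\alpha^{n}z\to0$ and $\|\alpha\,T_\alpha Bz\|\le \alpha M\|Bz\|\to0$. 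Thus $T_\alpha\to0$ pointwise on the subset $R(A)$, which is dense in $\overline{R(A)}$, and the Banach--Steinhaus theorem (exactly as in the proof of the preceding theorem) propagates the pointwise convergence to all of $\overline{R(A)}$. Taking $y=Bx_{n-1}\in\overline{R(A)}$ then gives $\alpha^{n}R_\alpha Bx_{n-1}\to0$, i.e. $x_0$ is $B$-normal.

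For necessity I would argue by contraposition, assuming $\alpha^{n}R_\alpha Bx_{n-1}\to0$ and aiming at $Bx_{n-1}\in\overline{R(A)}$. The transparent case is $n=1$: from $Bx_0=A R_\alpha Bx_0+B(\alpha R_\alpha Bx_0)$ and $\alpha R_\alpha Bx_0\to0$ one gets $AR_\alpha Bx_0\to Bx_0$, and since $AR_\alpha Bx_0\in R(A)$ the limit lies in $\overline{R(A)}$. For $n\ge2$ the same manipulation only delivers $A(\alpha^{\,n-1}R_\alpha Bx_{n-1})\to0$, which does not by itself locate $Bx_{n-1}$ relative to $R(A)$; this is the main obstacle, since uniform boundedness of $T_\alpha$ pins down its limit only on $\overline{R(A)}$ and says nothing off it. The plan here is to exploit the full structure encoded by the growth conditions $\lim_{\alpha\to0}c(\alpha)\alpha^{i}=\infty$ for $i\le n-1$ and $\lim_{\alpha\to0}c(\alpha)\alpha^{n}<\infty$: these fix the order of the singularity of $R_\alpha$ at $\alpha=0$ to be exactly $n$, so that $\alpha^{n}R_\alpha$ admits a nonzero strong limit whose null space is precisely $\overline{R(A)}$. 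Concretely I would build, from the adjoint generalized Jordan chain attached to $x_0,\dots,x_{n-1}$ (the apparatus of \cite{trenogin,log1976,sid1978}), a functional dual to $Bx_{n-1}$ that stays non-degenerate in the limit, forcing $\alpha^{n}R_\alpha Bx_{n-1}\not\to0$ whenever $Bx_{n-1}\notin\overline{R(A)}$. Extracting this nonzero residue is the crux; once it is in hand, the reduction of the first paragraph closes the equivalence.
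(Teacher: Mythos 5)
Your telescoping identity and your proof of sufficiency are exactly the paper's argument: the chain $Ax_i=Bx_{i-1}$ gives $R_\alpha\alpha Bx_0=\sum_{j=1}^{n-1}(-1)^{j-1}\alpha^j x_j+(-1)^{n-1}\alpha^n R_\alpha Bx_{n-1}$, the last term is handled by writing $Bx_{n-1}=Ax_n$ when $Bx_{n-1}\in R(A)$, and the uniform bound $\alpha^n c(\alpha)\le M$ plus Banach--Steinhaus extends the pointwise convergence to $\overline{R(A)}$. (Your variant, working with $T_\alpha=\alpha^n R_\alpha$ on $Y$ rather than with $\alpha^n R_\alpha B$ on the manifold $L=\{x : Bx\in R(A)\}$, is a cosmetic difference only, and your estimate $\|\alpha T_\alpha Bz\|\le\alpha M\|Bz\|$ is if anything cleaner than the paper's $\alpha^{n+1}c(\alpha)\|Bx_n\|$.)

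The gap is the necessity direction for $n\ge 2$, which you openly leave unfinished: the appeal to an ``adjoint generalized Jordan chain'' producing a non-degenerate functional is a program, not a proof, and nothing in your write-up extracts the claimed nonzero residue. You are right that the identity $Bx_{n-1}-AR_\alpha Bx_{n-1}=B(\alpha R_\alpha Bx_{n-1})$ only closes the loop when $n=1$ (where $\alpha R_\alpha Bx_0\to0$ is precisely the hypothesis), and that for $n\ge2$ the hypothesis $\alpha^n R_\alpha Bx_{n-1}\to0$ does not control $\alpha R_\alpha Bx_{n-1}$. You should know, however, that the paper's own proof has the same omission: it establishes only the ``if'' half (pointwise convergence to zero on $\{x : Bx\in\overline{R(A)}\}$ via Banach--Steinhaus) and never argues that $B$-normality fails when $Bx_{n-1}\notin\overline{R(A)}$; Banach--Steinhaus guarantees that the set where $\alpha^n R_\alpha y\to0$ is a closed subspace containing $\overline{R(A)}$, but not that it equals it. So your diagnosis of the obstacle is correct and your $n=1$ argument is a genuine (small) addition beyond what the paper proves; what is missing from your proposal --- a completed necessity argument for $n\ge2$ --- is missing from the paper as well.
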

\begin{proof}
Since  $A x_i = B x_{i-1}$ we have an equality
$(A+\alpha B)^{-1} \alpha B x_0  = \alpha (A+\alpha B)^{-1} (Ax_1 +\alpha B x_1 - \alpha B x_1) 
=\alpha x_1 -\alpha^2 (A+\alpha B)^{-1} B x_1 = \cdots = \alpha x_1 -\alpha^2 x_2 + \cdots -  (-1)^n \alpha^n (A+\alpha B)^{-1} B x_{n-1}, $
where if $\alpha \rightarrow 0$  then first $n-2$ terms located on the right hand side are infinitesimal.
Using the Banach--Steinhaus  Theorem lets make sure that 
$\left\{  \alpha^n ||(A+\alpha B)^{-1} B x_{n-1}||\right\}$  is infinitesimal.
Indeed, if  $B x_{n-1} \in \mathbb{R}(A),$
then there exist $x_n$ such as $A x_n = B x_{n-1}.$
 But in this case
 $\alpha^n (A+ \alpha B)^{-1} B x_{n-1} = \alpha^n (A+\alpha B)^{-1}(A+\alpha B -\alpha B)x_n = \alpha^n x_n -\alpha^{n+1} (A+\alpha B)^{1}  B x_n, $
where  $\alpha^{n+1} ||(A+\alpha B)^{1} B x_n || \leq \alpha^{n+1} c(\alpha) ||B x_n||,$
$\lim\limits_{\alpha \rightarrow 0} \alpha^{n+1} c(\alpha) = 0.$
Consequently, $\{ ||\alpha^n (A+\alpha B)^{1}  B x_{n-1} || \}$ is
infinitesimal, and the sequence of linear operators
$\{ \alpha^n (A+\alpha B)^{1}  B\}$
pointwise converges to zero operator on the linear manifold
$L = \{ x | B x \in {R}(A) \}$
and the sequence $\{|| \alpha^n (A+\alpha B)^{1} B ||\}$
 is bounded.
Since $I = \{ x \, |\, Bx \in \overline{{R}(A)} \}$
we complete the proof by the reference to the Banach--Steinhaus  Theorem.
\end{proof}
We apply   Th. 2 for construction of the  stable differentiation algorithm below.

\section{On differentiation regularization}

 Let $y: I \subset \mathbb{R} \rightarrow \mathbb{R}$   be continuous 
and differentiable function on the interval $I,$ and let
 its derivative  $y^{\prime}(t)$ be continuous  on the interval $(a,b) \subset I.$
Then $y(t) - y(+a) - y^{\prime}(+a)(t-a) = o(t-a)$ as $t \rightarrow +a.$
Let 
$\tilde{y}: \, [a,b] \rightarrow \mathbb{R}$ be a bounded function and values
  $c,\, d$ such as
$\sup\limits_{a<t<b} |\tilde{f}(t) - f(t)| = \mathcal{O}(\delta), $
where $f(t) = y(t) - y(+a) - y^{\prime}(+a)(t-a), $ $ \tilde{f} = \tilde{y}(t) - c - d(t-a).$
In applications the values
 $\tilde{y}(t_i)$  are usually know for $t_i = ih\in [a,b]$
such as $|y(t_i) - \tilde{y}(t_i)|=\mathcal{O}(\delta).$
 Our objective here is to find $\tilde{y}_i^{\prime}(t_i)$ with accuracy up to $\varepsilon$. 
Stable differentiation problem attracted many scientists including A.~N.~Tikhonov, V.~Ya.~Arsenin,  V.~V.~Vasin, V.~B.~Demidovich, T.~F.~Dolgopolova and V.~K.~Ivanov. Here readers may refer to textbook  \cite{tih1974}, p. 158 and \cite{hao2012} for
review of recent results .
Let us introduce the following equations:
 \begin{equation}
\int_a^t x(s)\, ds = f(t),
\label{eq31}
\end{equation}
$$ \int_a^t x_{\alpha}(s)\, ds + \alpha x_{\alpha}(t) = \tilde{f}(t), \,\,\,\,\,\,
 \int_a^t \tilde{x}_{\alpha}(s)\, ds + \alpha \tilde{x}_{\alpha}(t) = \tilde{f}(t).$$
Therefore here we have $A:=\int_a^t [\cdot]\, ds,$ 
${{R}(A)} = \left\{f(t)\in {\mathcal C}_{[a,b]}^{(1)}, \, f(+a)=0  \right\}=: \stackrel{\hspace{.9mm} \circ\,\,\,\,\,\,\,\,\,(1)\hspace{-6mm}}{\mathcal{C}}_{\hspace{-1mm}\,\,[a, b],},$\\
$\overline{{R}(A)} = \stackrel{\hspace{3.9mm} \circ\,\,\,\,\,\,\,\,\,}{\mathcal{C}}_{\hspace{-5mm}\,\,[a,\sc b],}$
 $B(\alpha):=\alpha I,$
$X=Y=\mathcal{C}_{[a,b]}.$  We construct the inverse operator  $(A+\alpha I)^{-1} \in \mathcal{L}
(\mathcal{C}_{[a,b]} \rightarrow \mathcal{C}_{[a,b]})$
explicitly as 
 $(A+\alpha I)^{-1} = \frac{1}{\alpha} - \frac{1}{\alpha^2} \int_a^t e^{ - \frac{t-s}{\alpha}}
[\cdot] \, ds. $
Since $f(t) \in {R}(A)$ then equation 
(\ref{eq31}) has unique solution
 $x^*(t) = A^{-1}f = y^{\prime}(t) - y^{\prime}(+a).$
It is to be noted that $$||(A+\alpha I)^{-1}||_{\mathcal{L}(\mathcal{C}_{[a,b]} \rightarrow 
\mathcal{C}_{[a,b]})}  \leq \frac{1}{\alpha} \left( 1+ \frac{1}{\alpha}\max\limits_{a\leq t \leq b}
\int_a^t e^{-\frac{t-s}{\alpha}} \, ds\right)
 = \frac{1}{\alpha}(2 - e^{-\frac{a-b}{\alpha}}) <\frac{2}{\alpha}.$$
Parameter $\alpha$ should be coordinated with
  $\delta,$ e.g. if  $\alpha =\sqrt{\delta}.$
Since $||B(\alpha)|| = \alpha,$ $c(\alpha)=\frac{2}{\alpha},$ $\mathcal{N}(A) = \{0\}$
then based on  Th. 2 continuous function $x^*(t) = y^{\prime}(t)-y^{\prime}(+a)$
is $B$-normal solution iff $x^*(t) \in \overline{{R}(A)}.$ 
In our case  ${R}(A) = \stackrel{\hspace{.9mm} \circ\,\,\,\,\,\,\,\,\,(1)\hspace{-6mm}}{\mathcal{C}}_{\hspace{-1mm}\,\,[a, b],}.$
Taking into account that linear functions space $\stackrel{\hspace{.9mm} \circ\,\,\,\,\,\,\,\,\,(1)\hspace{-6mm}}{\mathcal{C}}_{\hspace{-1mm}\,\,[a, b],}$ is dense  $L_1 = \{f(t) \in \mathcal{C}_{[a,b]}, \, f(+a)=0 \},$
$x^*(+a)=0,$ then  $x^*(t) \in \overline{{R}(A)}.$
 Therefore based on the Main Theorem and  Th. 2,
the  formula
\begin{equation}
\tilde{x}_{\alpha}(t) = \frac{\tilde{y}(t) - c - d(t-a)}{\alpha} 
- \frac{1}{\alpha^2} \int_a^t e^{-\frac{t-s}{\alpha}} \left (  \tilde{y}(s) - c - d(s-a) \right)\, ds 
\label{eq34}
\end{equation}
defines algorithm of  stable differentiation $\tilde{y}^{\prime}(t)$.
Or, more precisely $\forall \varepsilon >0$ $\exists \delta_0=\delta_0(\varepsilon)>0$
 such as, if
$\sup\limits_{a<t<b} |\tilde{f}(t) - f(t) | \leq \delta, \, \delta \leq \delta_0(\varepsilon)$
then $\max\limits_{a\leq t \leq b} \left | \tilde{x}_{\alpha}(t)- f^{\prime}(t) \right|\leq \varepsilon.$
If we select $\alpha = \sqrt{\delta},$ then 
$\lim\limits_{\delta \rightarrow 0} \max\limits_{a\leq t \leq b} |\tilde{x}_{\alpha}(t) - (y^{\prime}(t) - y^{\prime}(+a))| =0.$
Therefore, $\{ \tilde{x}_{\alpha}\}$ 
converges uniformly to
 $y^{\prime}(t) - y^{\prime}(+a)$ as $\delta \rightarrow 0.$
Based on  (\ref{eq34}) we constructed  regularized
differentiation algorithm, which is uniform w.r.t. $t \in [a,\, b]$.
Let us demonstrate its efficiency below.

\section{Numeric examples}


In this section we included two examples to demonstrate the efficiency of our approach.
We add noise to exact data as $\tilde{y}(t)=y(t)+\delta R(t)$ 
with the noise levels $\delta$=0.1, $\delta$=0.01 and $\delta$=0.001,
where $R(t)$ is a random function with zero mean value and standard deviation $\sigma =1$.
The number of grid points used is 512. Trapezoidal quadrature rule is used.

\begin{example} \label{example1}
For the first example we use the function
$
y(t)=\frac{1}{t^3+1}\sin\left(\frac{\pi t}{4}\right),\; t\in [0,\,3],
$
with its  derivative
$
{y}^{\prime}(t)=\frac{-3t^2}{{(t^3+1)}^{2}}\sin\left(\frac{\pi t}{4}\right)+\frac{\pi}{4(t^3+1)}\cos\left(\frac{\pi t}{4}\right).\,
$
Fig. 1
demonstrates exact and computed derivatives and the errors, noise level $\delta$=0.001.
The  maximum errors are given in Tab. 1.
\begin{figure}

\subfloat[]
{
    \includegraphics[scale=.62]{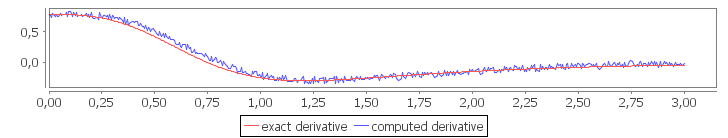}
    \label{fig:fig1}
}
\par
\subfloat[]
{
    \includegraphics[scale=.62]{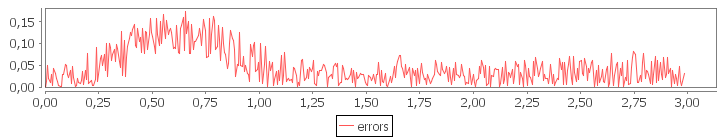}
    \label{fig:fig1}
}
\caption{Example 1. (a) the exact and the computed derivatives $\tilde{y}^{\prime}(t)$. (b) the errors.\\ The noise level $\delta$=0.001 is used  to generate $\tilde{y}(t).$}
\label{fig:foo1}
\end{figure}
%
%
\begin{figure}[htp!]
\begin{flushright}
\bf{Table 1}\vspace{-2mm}
\end{flushright}
\begin{center}\label{tab1}
\begin{tabular}{|c|c|c|c|}
\hline
\textbf{$\delta$} & 0.1 & 0.01 & 0.001\\ \hline
max error & 0{.}172977091 & 0{.}284849645 & 0{.}70584444\\ \hline
\end{tabular}
\end{center}
\end{figure}
\end{example}
\begin{example} \label{example2}
Here we used the exact function
$ y(t)=\cos(\frac{\pi t}{8})e^{-t^2},\; t\in [0,\,5], $
with its exact derivative
$ {y}^{\prime}(t)=-e^{-t^2}(\frac{\pi}{8}\sin(\frac{\pi t}{8})+2t \cos(\frac{\pi t}{8})).
$
The  maximum errors for this example are shown in Tab. 2.

\begin{figure}

\subfloat[]
{
    \includegraphics[scale=.62]{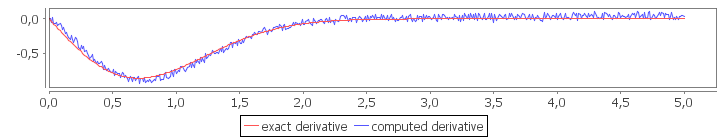}
    \label{fig:fig2}
}
\par
\subfloat[]
{
    \includegraphics[scale=.62]{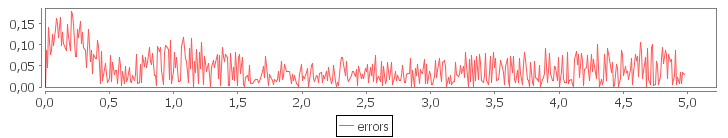}
    \label{fig:fig2}
}
\caption{ Example 2. (a) the exact and the computed derivatives $\tilde{y}^{\prime}(t).$ (b) the errors.\\ The noise level $\delta$=0.001 is used to generate $\tilde{y}(t).$}
\label{fig:foo2}
\end{figure}

%


\begin{figure}[htp!]
\begin{flushright}
\bf{Table 2}\vspace{-2mm}
\end{flushright}
\begin{center}
\label{tab2}
\begin{tabular}{|c|c|c|c|}
\hline
\textbf{$\delta$} & 0.1 & 0.01 & 0.001\\ \hline
max error & 0{.}169489112 & 0{.}277548421 & 0{.}637657088\\ \hline
\end{tabular}
\end{center}
\end{figure}

\end{example}

%


\newpage

\end{document}